\documentclass[10pt, a4paper,reqno]{amsart}

\usepackage{amsmath}
\usepackage{amsfonts}
\usepackage{amssymb}
\usepackage{amsthm}
\usepackage[active]{srcltx}
\usepackage{color}
\usepackage[colorlinks,citecolor=blue,linkcolor=blue]{hyperref}

\numberwithin{equation}{section}

\newtheorem{theorem}[equation]{Theorem}
\newtheorem{lemma}[equation]{Lemma}

\newtheorem{corollary}[equation]{Corollary}

\theoremstyle{definition}
\newtheorem{definition}[equation]{Definition}

\theoremstyle{remark}
\newtheorem{remark}[equation]{Remark}

\hyphenation{super-para-bolic}

\def\kint_#1{\mathchoice%
          {\mathop{\kern 0.2em\vrule width 0.6em height 0.69678ex depth -0.58065ex
                  \kern -0.8em \intop}\nolimits_{\kern -0.4em#1}}%
          {\mathop{\kern 0.1em\vrule width 0.5em height 0.69678ex depth -0.60387ex
                  \kern -0.6em \intop}\nolimits_{#1}}%
          {\mathop{\kern 0.1em\vrule width 0.5em height 0.69678ex depth -0.60387ex
                  \kern -0.6em \intop}\nolimits_{#1}}%
          {\mathop{\kern 0.1em\vrule width 0.5em height 0.69678ex depth -0.60387ex
                  \kern -0.6em \intop}\nolimits_{#1}}}
\def\vintslides_#1{\mathchoice%
          {\mathop{\kern 0.1em\vrule width 0.5em height 0.697ex depth -0.581ex
                  \kern -0.6em \intop}\nolimits_{\kern -0.4em#1}}%
          {\mathop{\kern 0.1em\vrule width 0.3em height 0.697ex depth -0.604ex
                  \kern -0.4em \intop}\nolimits_{#1}}%
          {\mathop{\kern 0.1em\vrule width 0.3em height 0.697ex depth -0.604ex
                  \kern -0.4em \intop}\nolimits_{#1}}%
          {\mathop{\kern 0.1em\vrule width 0.3em height 0.697ex depth -0.604ex
                  \kern -0.4em \intop}\nolimits_{#1}}}

\newcommand{\df}[1]{\buildrel\mbox{\small def}\over{#1}}

\newcommand{\eps}{\varepsilon}

\newcommand{\R}{\mathbb{R}}
\newcommand{\Rn}{\mathbb{R}^d}

\newcommand{\esssup}{\operatornamewithlimits{ess\, sup}}

\newcommand{\spt}{\operatorname{spt}}

\newcommand{\divt}{\operatorname{div}}

\renewcommand{\l}{\left}
\renewcommand{\r}{\right}

\def\Xint#1{\mathchoice
{\XXint\displaystyle\textstyle{#1}}%
{\XXint\textstyle\scriptstyle{#1}}%
{\XXint\scriptstyle\scriptscriptstyle{#1}}%
{\XXint\scriptscriptstyle\scriptscriptstyle{#1}}%
\!\int}

\def\XXint#1#2#3{{\setbox0=\hbox{$#1{#2#3}{\int}$}
\vcenter{\hbox{$#2#3$}}\kern-.5\wd0}}

\def\dashint{\Xint-}

\title[On the interior regularity of the 2-D Euler equations]{On the interior regularity of weak solutions to the 2-D incompressible Euler equations}

\author[J.~Siljander]{Juhana Siljander}
\address{University of Jyv\"askyl\"a, Department of Mathematics and Statistics, P.O. Box 35, FI-40014 University of Jyv\"askyl\"a, Finland}{}
\email{juhana.siljander@jyu.fi}

\author[J.M.~Urbano]{Jos\'{e} Miguel Urbano}
\address{CMUC, Department of
Mathematics, University of Coimbra, 3001-501 Coimbra, Portugal}{}
\email{jmurb@mat.uc.pt}

\begin{document}

\subjclass[2010]{Primary 35Q31. Secondary 76B03, 35B65, 35Q30}

\keywords{Incompressible Euler and Navier-Stokes system, Moser iteration, Biot-Savart law, local regularity}

\begin{abstract}
We consider the 2-D incompressible Euler equations in a bounded domain and show that local weak solutions are exponentially integrable, uniformly in time, under minimal integrability conditions. This is a Serrin-type interior regularity result
\begin{equation*}
u \in L_{\rm loc}^{2+\eps}(\Omega_T) \implies {\rm regularity}
\end{equation*}
for weak solutions in the energy space $L_t^\infty L_x^2$ satisfying appropriate vorticity estimates. The argument is completely local in nature as the result follows from the structural properties of the equation alone, while completely avoiding all sorts of boundary conditions and related gradient estimates. To the best of our knowledge, the approach we follow is new in the context of Euler equations and provides an alternative look at interior regularity issues. We also show how our method can be used to give a modified proof of the classical Serrin condition for the regularity of the Navier-Stokes equations in any dimension.
\end{abstract}

\maketitle

\section{Introduction}

In this paper, we consider local weak solutions $u \in L_{\rm loc}^2(\Omega_T)$ of the incompressible Euler equations in two space dimensions
\begin{equation}\label{equation}
\partial_t u + (u \cdot \nabla)u + \nabla p = 0 \qquad \textrm{and} \qquad   {\rm div}\, u = 0,
\end{equation}
and show, under appropriate integrability conditions on $u$ and its vorticity $\omega$, they are exponentially integrable uniformly in time.

Despite their physical relevance, for instance in the context of Kolmogorov's theory of turbulence, much of what we know about the weak solutions of \eqref{equation} concerns some sort of pathological behaviour. For example, weak solutions are not unique:  Scheffer constructed in \cite{Sche93} a two-dimensional weak solution which is compactly supported in space and time; a simpler counter-example was provided later by Shnirelman  \cite{Shni97}. More recently, De Lellis and Szekelyhidi Jr. \cite{DeLeSzek09} have shown that even locally bounded weak solutions may fail to be unique. Another line of research deals with Onsager's conjecture that the critical exponent for H\"older continuous weak solutions to dissipate energy is $\tfrac{1}{3}$. The current threshold exponent for anomalous dissipation is $\frac{1}{5}$ (see \cite{Iset12, BuckDeLeIsetSzek13}) and the fact that any $C^{0,\frac{1}{3}}$ weak solution conserves energy was proven by Constantin, E and Titi in \cite{ConsETiti94}. For more on Euler equations we refer to the book of Majda and Bertozzi \cite{BerMaj}, and the expository articles by Constantin \cite{Cons07} or Bardos and Titi \cite{BardTiti07}. Recent interesting papers in two dimensions are \cite{KisSve, KisZla}.

Instead of studying pathological behaviour, we aim here at establishing a regularity result. In the whole space $\R^2$, or in a bounded domain with the Neumann boundary condition $u \cdot \nu =0$, it is well known that sufficient integrability of the vorticity $\omega$ implies the gradient estimate
\begin{equation}\label{grad_est}
u, \omega \in L_t^\infty L_x^2 \implies u \in L_t^\infty H_x^1.
\end{equation}
By Poincar\'e inequality, one immediately obtains that $u \in L_t^\infty BMO_x$, which is also known to be optimal by explicit solution formulas. Our contribution is in showing that almost the same optimal regularity can be obtained, by purely local arguments, merely from the structural properties of equations~\eqref{equation}. 

More precisely, we work in a domain $\Omega \subset \R^2$, without any sort of boundary conditions, and prove the exponential integrability of the velocity, uniformly in time, under minimal integrability assumptions. Our main result is a Serrin-type~\cite{Serr62} interior regularity theorem
\begin{equation*}
u \in L_{\rm loc}^{2+\eps}(\Omega_T) \implies {\rm regularity}
\end{equation*}
for weak solutions in the energy space $L_t^\infty L_x^2$. In particular, we do not make any kind of assumptions on initial data.

In the proof we study the vorticity formulation of the Euler equations. In order to make sense of weak solutions, we assume $\omega \in L_{\rm loc}^2(\Omega_T)$ and exploit the corresponding energy estimates for the vorticity. We then combine these {\it vorticity estimates} with the Hardy-Littlewood-Sobolev theorem on fractional integration, and apply a Biot-Savart type potential estimate to improve the integrability of the velocity field in a Moser iteration procedure, to obtain the main result. Our reasoning, in a nutshell, is that a proper modification of the Moser local regularity machinery (see \cite{Mose61}) can still be applied completely without gradient estimates, such as~\eqref{grad_est}, if we have a suitable Biot-Savart type result at our disposal. 

The same approach naturally applies to the Navier-Stokes system and we give a modified proof of the classical Serrin regularity condition~\cite{Serr62} through a Moser iteration technique where, contrary to the original argument of Serrin, we first prove $u \in L_t^\infty L_x^p$, for a large enough $p$, and only then show the boundedness of the vorticity $\omega$. While this machinery is probably well-known to the experts, the argument is still somewhat delicate and all the details we provide are scattered and difficult to find in the literature. Interestingly enough, we manage to improve the original Serrin result in the two dimensional setting.

\section{Preliminaries and Main result} \label{section:preliminaries}

\subsection{Notation}
We denote by $B_r(x_o)$ the standard open ball in $\Rn$, with radius $r>0$ and centre at $x_o$. Let $\Omega$ be an open set in $\Rn$ and denote the space-time cylinder $\Omega_T\df=\Omega \times (0, T]$. For the boundary of the set we use the standard notation $\partial \Omega$. We define the parabolic boundary of a space-time cylinder as
\[
\partial_p \Omega_T \df= \big(\partial \Omega \times [0, T] \big) \cup \big(\Omega \times \{0\}\big)
\]
and denote $d\mu \df=dx\,dt$.

As we will only consider interior regularity results, it is enough to consider local space-time cylinders $B_r(x_o) \times (t_o - r, t_o)$. By translation, we may always assume $x_o=t_o=0$. For such cylinder, we will use the notation
\[
Q_r  =B_r \times I_r \df = B_r(0) \times (-r,0),
\]
for $r>0$. For the integral average, we write
\[
\dashint_\Omega |u| \, dx\df=\frac1{|\Omega|}\int_\Omega |u| \, dx.
\]
The corresponding averaged $L^q$-norm will be denoted by
\[
\|u\|_{L^q(\Omega), avg}\df= \l(\dashint_\Omega |u|^q \, dx\r)^{\frac1q},
\]
for $q> 0$. For $\gamma >0$, we define the class of exponentially integrable functions $u \in \rm{exp\big(\hspace{-.1truecm}\sqrt[\gamma]{L(\Omega)}\big)}$ by requiring
\[
\int_\Omega e^{\sqrt[\gamma]{|u(x)|}} \, dx < \infty.
\]
The corresponding local space is defined by requiring the finiteness of this integral for all compact sets $K \Subset \Omega$.

\bigskip

\subsection{Definitions} We now make precise the concept of weak solution we will be using.

\begin{definition}\label{weak_solution}
We say a vector field $u \in L_{\rm loc}^2(\Omega_T)$ is a weak solution of equation~\eqref{equation} in $\Omega_T$ if 
\[
\int_{\Omega} u(x, t) \cdot  \nabla \psi(x) \, dx = 0,
\]
for every $\psi\in H_0^{1}(\Omega)$ and almost every $t \in [0, T]$, and if
\begin{equation}\label{weak_sol_eq}
\begin{split}
&\int_{0}^{T}\int_{\Omega} u \cdot \partial_t\phi   + \sum_{i=1}^d u_i(u\cdot \nabla) \phi_i   \, dx \, dt =  0,
\end{split}
\end{equation}
for every smooth test function $\phi$ with compact support in space-time such that $\nabla \cdot \phi=0$.

\end{definition}

Let us now restrict to the two-dimensional setting and introduce the vorticity
$$\omega \df= {\rm curl}\, u = \partial_{x_1} u_2 -  \partial_{x_2} u_1 .$$
It is an easy exercise to show that if $u$ is a weak solution of equation~\eqref{equation} in $\Omega_T$ and $\omega \in L_{\rm loc}^2(\Omega_T)$, then the vorticity equation
\[
\partial_t \omega + (u\cdot \nabla) \omega= 0
\]
holds in the weak sense.
We will exploit in the sequel energy estimates for this equation, which we formalize into the following definition.

\begin{definition}\label{def:VE}
Let $\Omega \subset \R^2$ and $u$ be a weak solution of equation~\eqref{equation} in $\Omega_T$. We say that $u$ satisfies the \textit{vorticity estimates} if $\omega \in L_{\rm loc}^2(\Omega_T)$ and there exists a constant $V_o >0$ such that
\begin{equation}\label{VE}\tag{VE}
\begin{split}
&\esssup_{0 \le t \le T} \int_{\Omega} |\omega|^{1+\alpha}\zeta(x,t) \, dx \\
&\le V_o \int_{0}^{T}\int_{\Omega} |\omega|^{1+\alpha}\big[ |u| |\nabla \zeta|+\left| \partial_t \zeta \right| \big]  \, d\mu,
\end{split}
\end{equation}
for every $ \alpha \in [0,1)$, and for every non-negative test function $\zeta \in C^\infty(\Omega_T)$ that vanishes in a neighbourhood of the parabolic boundary $\partial_p\Omega_T$.
\end{definition}

\begin{remark}
A rigorous treatment of the vorticity estimates would typically require an approximation scheme, for example in the spirit of the celebrated DiPerna-Lions theory \cite{DiPL}. Strictly speaking, this requires the vorticity to be at least in $L_t^\infty L_x^{2}$, which is stronger than we assume. This notwithstanding, subsequent results by  Ambrosio~\cite{Ambr04} and Bouchut-Crippa~\cite{BC13} brought the integrability threshold closer to ours. 

The focus in this paper is not in deriving these \textit{a priori} estimates but rather in showing that, once they hold, interior regularity can be obtained under minimal integrability assumptions. The underlying reasoning parallels the use of De Giorgi classes in the context of regularity theory for second-order elliptic and parabolic equations \cite{DeGi57, GianVesp06}; see also \cite{Vass07} for an application to the Navier-Stokes system. 

\end{remark}

\subsection{The main result} Our main theorem is an exponential integrability result implying that the velocity field $u$ of the incompressible Euler system cannot form singularities too fast if it is -- a priori -- sufficiently integrable together with its vorticity. This is a Serrin-type~\cite{Serr62} interior regularity result yielding, for weak solutions $u \in L_t^\infty L_x^2$ satisfying \eqref{VE},
\begin{equation*}
u \in L_{\rm loc}^{2+\eps}(\Omega_T) \implies {\rm regularity}.
\end{equation*}
In other words, if a weak solution blows up then either our integrability assumptions must be violated or the blow-up can only be, roughly speaking, of logarithmic type.

\begin{theorem}\label{main_theorem}
Let $u  \in L_{\rm loc}^\infty(0, T; L_{\rm loc}^{2}(\Omega)) $ be a weak solution of equation~\eqref{equation} in two dimensions satisfying the vorticity estimates~\eqref{VE}. Then, if $u \in    L_{\rm loc}^{2+\eps}(\Omega_T)$ for some $\eps>0$, there exists a constant $C=C(\eps, V_o)>0$ such that  for every $Q_{4r}=B_{4r} \times I_{4r} \Subset \Omega_T$ we have
\[
u \in L^\infty(I_r; \rm{exp\big(\hspace{-.1truecm}\sqrt[\gamma]{L(B_r)}\big)}),
\]
where
\[
\gamma  \df= C \l[\l(r^2\dashint_{Q_{4r}} |\omega|^2 \, d\mu+1\r)^{\frac12}\l(\dashint_{Q_{4r}} |u|^{2+\eps} \ d\mu+1\r)^{\frac1{2+\eps}}+\|u\|_{L^\infty(I_{4r};L^1(B_{4r}), avg)}\r].
\]

\end{theorem}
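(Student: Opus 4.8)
The plan is to run a Moser-type iteration on the spatial $L^p$-norms of the velocity, uniformly in time, in which the spatial smoothing normally supplied by a gradient energy term is instead furnished by a \emph{local} Biot--Savart representation. Concretely, I would first record a localized potential estimate: writing $u=\nabla^\perp\psi$ with $-\Delta\psi=\omega$ on a ball and peeling off the harmonic correction, one obtains, for $x\in B_\rho$,
\[
|u(x)|\le C\int_{B_{2\rho}}\frac{|\omega(y)|}{|x-y|}\,dy+\frac{C}{\rho}\dashint_{B_{2\rho}}|u|\,dy,
\]
so that $u$ is controlled, up to an averaged remainder accounting for the local correction, by the Riesz potential $I_1|\omega|$. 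That remainder is precisely what produces the term $\|u\|_{L^\infty(I_{4r};L^1(B_{4r}),avg)}$ in the definition of $\gamma$.

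The second ingredient is the Hardy--Littlewood--Sobolev theorem for $I_1$ in dimension two: $\|I_1|\omega|\|_{L^q}\lesssim\|\omega\|_{L^m}$ whenever $\tfrac1q=\tfrac1m-\tfrac12$ with $1<m<2$. Thus a bound on $\omega$ in $L^m_x$ upgrades to a bound on $u$ in $L^{m^*}_x$, $m^*=\tfrac{2m}{2-m}$, with $m^*\to\infty$ as $m\uparrow2$. The third ingredient is the vorticity estimate \eqref{VE}: testing with a cutoff $\zeta$ supported in shrinking cylinders and with vorticity power $1+\alpha$, the left side controls $\sup_t\int|\omega|^{1+\alpha}\zeta\,dx$, while the right side is the nonlinear term $\int\!\!\int|\omega|^{1+\alpha}\big(|u||\nabla\zeta|+|\partial_t\zeta|\big)\,d\mu$. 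Inserting the potential estimate converts this right side into an expression in $\omega$ alone (plus the averaged correction), which H\"older and HLS bound by a product of two vorticity norms, one carrying the extra power inherited from $|u|$.

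With these in hand I would couple the two exponents. At stage $n$ one carries simultaneous bounds $\omega\in L^\infty_tL^{m_n}_x$ and $u\in L^\infty_tL^{p_n}_x$ on a cylinder $Q_{\rho_n}$, linked by $p_n=m_n^*$. Feeding $u\in L^{p_n}_x$ into the right side of \eqref{VE} and interpolating the resulting space--time integral against the $L^\infty_tL^{m_n}_x$ bound --- the step where the subcritical room provided by $u\in L_{\rm loc}^{2+\eps}$ guarantees a \emph{strict} gain $m_{n+1}>m_n$ --- yields a new vorticity bound, which Biot--Savart and HLS turn into $u\in L^{p_{n+1}}_x$ with $p_{n+1}>p_n$. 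The nested cutoffs contribute the geometric factors $\rho^{-1}$ and the constant $V_o$; the seed of the iteration is the first step with $1+\alpha$ close to $2$, which is exactly what generates the product $\big(r^2\dashint_{Q_{4r}}|\omega|^2\big)^{1/2}\big(\dashint_{Q_{4r}}|u|^{2+\eps}\big)^{1/(2+\eps)}$ appearing in $\gamma$.

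The delicate point, and the main obstacle, is the accounting of constants across the iteration. Because the Biot--Savart gain $m^*$ degenerates as $m\uparrow2$, the per-step improvement in the exponent shrinks to zero, so the scheme does \emph{not} close at $L^\infty$; instead the exponents $p_n$ march to infinity while the accumulated constants grow, and the whole difficulty is to show they grow no faster than $\sup_t\|u(t)\|_{L^{p}(B_r)}\le C\,\gamma\,p^{\gamma}$ for all large $p$. Once such polynomial-in-$p$ control with the stated $\gamma$ is in place, exponential integrability follows by expanding $\exp\!\big(|u|^{1/\gamma}\big)$ in a Taylor series and summing, using $k!\ge(k/e)^k$ to dominate it by a convergent geometric series; this gives $u\in L^\infty(I_r;\rm{exp\big(\hspace{-.1truecm}\sqrt[\gamma]{L(B_r)}\big)})$ with $C=C(\eps,V_o)$ and $\gamma$ exactly as claimed.
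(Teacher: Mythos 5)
Your proposal is correct in outline and follows essentially the same route as the paper's proof: the local Biot--Savart bound (Lemma~\ref{vorticity_singular}), the Hardy--Littlewood--Sobolev estimate (Lemma~\ref{singular}) and the vorticity estimates~\eqref{VE} are coupled through the exponent recursion $q_{k+1}=2(q_k-1)$, $q_0=2+\eps$, to produce polynomial-in-$p$ growth of $\sup_t\|u(\cdot,t)\|_{L^p}$ uniformly in time, after which exponential integrability follows from the Taylor expansion of $e^{|u|^{1/\gamma}}$ and Stirling's formula, exactly as in the paper's Step 5. The one piece you flag as ``the whole difficulty'' but do not carry out --- that the accumulated constants grow no faster than polynomially in $p$ --- is precisely the paper's Steps 3--4, where it works because the per-step constants $Cq_k^{1/2}$ enter the iterated estimate with exponents $q_{j+1-k}/q_j \sim 2^{1-k}$ that decay geometrically, so that $q^*\le 4-\log_2\eps$ and $\widehat q<\infty$ keep the product of constants under control; your setup (exponents essentially doubling at each step) is exactly the right one for this computation to close.
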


\begin{remark}
If $\Omega = \R^2$, or in the case of the Neumann boundary condition $u \cdot \nu =0$, the DiPerna-Lions borderline assumption is known to be equivalent to $u \in L_t^\infty H_x^{1}$, provided $u$ belongs to the energy space $L_t^\infty L_x^{2}$. In particular, we have
\begin{equation}\label{grad_embed}
u, \omega \in L_t^\infty L_x^2 \implies u \in L_t^\infty H_x^1
\end{equation}
and obtain immediately, from the Poincar\'e inequality, that $u \in L_t^\infty BMO_x$. This is also known to be optimal due to an explicit stationary solution obtained via 
\begin{equation*}
u(x) = \left(-\frac{x_2}{r^2}, \frac{x_1}{r^2}\right)^\mathrm{T} \int_0^r s\omega(s) \, ds, \quad r=|x|, 
\end{equation*}
with the radial vorticity
$$\omega(s) = \frac1{s \log \tfrac 1{s}}.$$
For details we refer to~\cite[Section 2.2.1]{BerMaj}. 

We work in the interior and our results are local in the sense that we do not use any sort of boundary (or initial) information, which is crucial in the above reasoning. Our contribution is thus in showing that one is still able to obtain the almost optimal interior integrability estimate $u \in L_t^\infty \exp(\hspace{-.1truecm}\sqrt[\gamma]{L_x})$ merely from the structural properties of equation~\eqref{equation}, while completely avoiding boundary conditions and the heavy machinery of gradient estimates such as~\eqref{grad_embed}. Finally, while we have stated the theorem with the standard energy space $L_t^\infty L_x^2$, our proof actually works with merely $u \in L_t^\infty L_x^1$.
\end{remark}

The proof uses a Moser iteration technique, with the Sobolev embedding replaced by the Hardy-Littlewood-Sobolev theorem on fractional integration. This provides the required bound for the Biot-Savart potential, which is then used to improve the integrability of $u$. We iterate the obtained estimates to deduce a quantitative growth rate for the spatial $L^p$-norm of $u$, uniformly in time. This enables us to show that the norms are exponentially summable, and we conclude the result.

In the case of the Navier-Stokes system, where we have additional control on the gradient, we may use our method to give a modified proof of the classical Serrin regularity condition. Moreover, we can relax the Serrin condition in two dimensions to assume merely $u \in L^q$ for $q>2$ instead of the Serrin condition which, more or less, assumes $q>4$.

\subsection{Auxiliary tools}

Although we do not employ the Sobolev embedding in our arguments for the Euler equations, we will use it in the very end when studying the Navier-Stokes system. Even there, we only use it as a formal tool, whereas the rigorous argument is conducted for suitable mollifications such that no existence of Sobolev gradients is required. We recall that for $d \ge 2$ and for $u \in H_0^{1}(\Omega)$, by the Sobolev embedding, there exists a constant $C=C(d)>0$ such that
\[
\l(\dashint_{\Omega} |u|^{2^*} \, dx\r)^{\frac1{2^*}} \le C\text{diam}(\Omega)\l(\dashint_{\Omega} |\nabla u|^{2} \, dx\r)^{\frac1{2}},
\]
where 
\[
2^*\df=
\begin{cases}
\text{any number in \,} (2,\infty), \quad &\text{for} \ d=2\\
\frac{2d}{d-2}, \quad &\text{for}\ d \ge 3.
\end{cases}
\]
An easy consequence of the above result is a parabolic Sobolev embedding for $u \in L^\infty(0, T; L^2(\Omega)) \cap L^2(0, T; H_0^{1}(\Omega))$. In this case, there exists a constant $C=C(d)>0$ such that
\begin{equation}\label{parabolic_Sobolev}
\begin{split}
&\l(\int_{0}^{T} \l( \dashint_{\Omega} |u|^q \, dx\r)^{\frac sq} \, dt \r)^{\frac1s} \\
&\qquad\le C\text{diam}(\Omega)^{\frac2s} \l(\esssup_{0 \le t \le T} \dashint_{\Omega} |u|^2 \, dx + \int_{0}^{T}\dashint_{\Omega} |\nabla u|^2 \, d\mu\r)^{\frac12},
\end{split}
\end{equation}
for any $2<q, s < \infty$ satisfying
\[
\frac dq +\frac2s =\frac d2.
\]

Next we turn to a Hardy-Littlewood-Sobolev lemma on fractional integration. We recall the definition of the Riesz potential
\[
(I_\beta f)(x)= (-\Delta)^{-\frac\beta2}f(x)= c_\beta \int_{\Rn} \frac{f(y)}{|x-y|^{d-\beta}} \, dy,
\]
with $0<\beta<d$, which arises naturally via the Biot-Savart law. Here the vorticity $\omega$ plays the role of $f$ and we have for the velocity field $|u| \le C |(I_1 \omega)(x)|$ (cf. Lemma~\ref{vorticity_singular}). We will however work in a ball rather than the whole space and for this reason we will always consider all locally defined functions as being extended as zero outside the local set to make sense of the above non-local integral. With this convention in mind, we have the following well-known lemma.

\begin{lemma}\label{singular}
Let $0<\beta<d$ and $f \in L^q(\Rn)$ for $1 < q < d/\beta$. Then there exists a constant $C=C(d, \beta, q)>0$ such that 
\[
\|I_{\beta} f\|_{L^s(\Rn)} \le C\|f\|_{L^q(\Rn)},
\]
for 
\[
s=\frac{dq}{d-\beta q}
\]
and
\[
C(d, \beta, q) \le C(d, \beta) \max\l\{(q-1)^{-\l(1-\frac\beta d\r)}, s^{1-\frac\beta d}\r\}.
\]
\end{lemma}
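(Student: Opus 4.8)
The plan is to follow the classical Hedberg-type argument: split the Riesz potential at a radius $R$ into a singular near part, controlled by the Hardy--Littlewood maximal function, and a tail, controlled by H\"older's inequality, then optimize $R$ pointwise and integrate. The existence of \emph{some} admissible constant is the standard Hardy--Littlewood--Sobolev theorem; the only delicate point, and the reason for running the argument by hand, is to track the dependence on $q$ and $s$ so as to recover the stated endpoint behaviour.

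First I would fix $x$ and $R>0$ and estimate
\[
|I_\beta f(x)| \le c_\beta \int_{|x-y|<R} \frac{|f(y)|}{|x-y|^{d-\beta}}\,dy + c_\beta \int_{|x-y|\ge R} \frac{|f(y)|}{|x-y|^{d-\beta}}\,dy.
\]
For the near part, decomposing the ball dyadically into annuli $\{2^{-k-1}R\le |x-y|<2^{-k}R\}$ and summing the resulting geometric series, which converges precisely because $\beta>0$, gives
\[
\int_{|x-y|<R} \frac{|f(y)|}{|x-y|^{d-\beta}}\,dy \le C(d,\beta)\,R^{\beta}\,Mf(x),
\]
with $Mf$ the Hardy--Littlewood maximal function. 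For the tail, H\"older's inequality in the exponents $q,q'$ produces $\|f\|_{L^q}$ times $\big(\int_{|z|\ge R}|z|^{-(d-\beta)q'}\,dz\big)^{1/q'}$; this integral is finite exactly under the hypothesis $q<d/\beta$, i.e. $(d-\beta)q'>d$, and evaluating it in polar coordinates yields
\[
\int_{|x-y|\ge R} \frac{|f(y)|}{|x-y|^{d-\beta}}\,dy \le C(d,\beta)\,\big((d-\beta)q'-d\big)^{-1/q'} R^{\beta-\frac dq}\,\|f\|_{L^q}.
\]

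Next I would balance the two contributions by choosing $R=\big(\|f\|_{L^q}/Mf(x)\big)^{q/d}$, which renders both terms equal to a constant times $\|f\|_{L^q}^{\beta q/d}\,Mf(x)^{1-\beta q/d}$, so that pointwise
\[
|I_\beta f(x)| \le C(d,\beta,q)\,\|f\|_{L^q}^{\beta q/d}\,Mf(x)^{1-\beta q/d}.
\]
Taking $L^s$ norms and exploiting the identity $s\,(1-\beta q/d)=q$, which holds precisely for $s=dq/(d-\beta q)$, the estimate collapses to the maximal function bound $\|Mf\|_{L^q}\le C_M\|f\|_{L^q}$, giving $\|I_\beta f\|_{L^s}\le C(d,\beta)\big(1+((d-\beta)q'-d)^{-1/q'}\big)\,C_M^{\,1-\beta q/d}\,\|f\|_{L^q}$.

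The main obstacle is then to convert this into the sharp two-sided constant, and here the bookkeeping splits according to the two endpoints, which are distinct since $\beta<d$ forces $d/\beta>1$. As $q\to1$ the maximal operator constant degenerates like $C_M\le C(d)(q-1)^{-1}$ by Marcinkiewicz interpolation between the weak $(1,1)$ and the trivial $L^\infty$ bounds, so that $C_M^{1-\beta q/d}\lesssim (q-1)^{-(1-\beta/d)}$ up to factors bounded as $q\to1$, while the tail constant stays bounded because $s$ remains bounded. As $q\to d/\beta$, equivalently $s\to\infty$, the maximal constant is harmless and instead I would invoke the elementary identity
\[
(d-\beta)q'-d=\frac{d-\beta q}{q-1}=\frac{dq}{s\,(q-1)},
\]
which gives $\big((d-\beta)q'-d\big)^{-1/q'}\sim s^{1/q'}=s^{1-1/q}\le s^{1-\beta/d}$. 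Since $q$ cannot approach both endpoints at once, at any given $q$ at most one of the two factors is large while the other stays bounded; their product is therefore dominated by $C(d,\beta)\max\{(q-1)^{-(1-\beta/d)},\,s^{1-\beta/d}\}$, which is the assertion.
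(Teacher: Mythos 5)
Your proposal is correct and is essentially the same argument as the one the paper relies on: the paper gives no proof of its own, deferring to \cite{Stei70}, Chapter V, \S 1.3, where the theorem is proved by exactly this splitting of the Riesz potential at a radius $R$, with the maximal function controlling the near part, H\"older's inequality controlling the tail, and pointwise optimization in $R$ followed by the $L^q$ maximal theorem. Your constant bookkeeping -- the Marcinkiewicz bound $C_M \lesssim (q-1)^{-1}$ near $q=1$, the identity $(d-\beta)q'-d=\frac{d-\beta q}{q-1}$ near $q = d/\beta$, and the observation that the two degeneracies cannot occur simultaneously -- correctly recovers the stated bound $C(d,\beta,q)\le C(d,\beta)\max\bigl\{(q-1)^{-(1-\beta/d)}, s^{1-\beta/d}\bigr\}$, which the paper asserts without writing out the details.
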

\noindent For the proof see ~\cite[Chapter V, \textsection 1.3]{Stei70}.

\section{A local Biot-Savart estimate}

In this section we establish a version of the local Biot-Savart law for solenoidal vector fields. Observe that, under our assumptions, the error term below is a constant, bounded both in space and time, unlike what happens in the classical paper of Serrin~\cite[Lemma 2]{Serr62}, where the error is known to be merely a harmonic function in space. In order to obtain the uniform bound in time, we need to use the assumption that $u$ is a weak solution and locally in $ L_t^\infty L_x^1\cap L_{t,x}^{2+\eps}$. In particular, it does not seem to be enough to assume $u$ being solenoidal. 

Let $\sigma >0$. In order to simplify the notation we will denote 
\[
\sigma Q \df= Q_{\sigma r}
\]
as well as
\[
\sigma I \df= I_{\sigma r}\quad \text{
and}
\quad
\sigma B \df= B_{\sigma r}.
\]
In the same spirit, for $\sigma=1$, we denote $Q\df=Q_r, B\df=B_r$ and $I\df= I_r$.

\begin{lemma}\label{vorticity_singular}
Let $\eps > 0$, $k \in \R^2$ and $0< \sigma < 1$. Suppose $u$ is a weak solution to equation~\eqref{equation} in $\Omega_T \Supset 2Q$ satisfying~\eqref{VE} such that $u \in L^{2+\eps}(2Q)$. Then there exists a constant $C=C(\eps, V_o)>0$ such that for every $(x,t) \in \sigma Q$ we have
\[
|u(x,t)-k|\le \frac1{2\pi}\int_{B} \frac{|\omega(y,t)|}{|x-y|}  \, dy + \frac{C}{(1-\sigma)^3}\big[\|u(\cdot, t)-k\|_{L^1(B), avg}+\Gamma\big],
\]
where
\begin{equation}\label{gamma_lemma}
\Gamma=\Gamma(2Q)\df=2r\l(\dashint_{2Q} |\omega|^2 \, d\mu\r)^{\frac12}\l(\dashint_{2Q} |u|^{2+\eps}+1 \ d\mu\r)^{\frac1{2+\eps}}.
\end{equation}
\begin{remark}
Observe that the term $\|u(\cdot, t)-k\|_{L^1(B), avg}$ is, a priori, well-defined only for almost every $t \in 2I$. In case this quantity is not well-defined, we interpret the value being infinite, and the estimate holds trivially.
\end{remark}

\begin{proof}
By translation, we may assume $Q$ to be centred at the origin, as already suggested by the slight abuse of notation in the statement of the lemma.
Since $\divt u=0$ in $\Omega$, there exists a stream function $\varphi(x, t)$ such that 
\begin{equation}\label{stream}
 u_1=-\partial_{x_2}\varphi \quad \text{and}\quad u_2=\partial_{x_1}\varphi.
\end{equation}
Observe that $ \omega = \text{curl\,}  u = \Delta \varphi$ in $\Omega \Supset B$. Recalling that the Green function for the Laplacian in the ball $B_r(0) \subset \R^2$ is given by
\[
G(x,y)=\frac{1}{2\pi}\l[\ln\l(\frac{|x|}{r}\l|y-\frac{x}{|x|^2}r^2\r|\r)-\ln|x-y|\r],
\]
we have 
\begin{equation}\label{representation}
\begin{split}
\varphi(x,t)&=-\int_{\partial B_r(0)} \frac{\partial G(x,y)}{\partial_y \nu}\varphi(y,t) \, d\mathcal H^{1}(y)- \int_{B_r(0)}G(x,y)\omega(y,t) \, dy \\
&\df= J_1(x,t)+J_2(x,t).
\end{split}\end{equation}
Here $\mathcal H^1$ denotes the one-dimensional Hausdorff measure on $\partial B_r(0)$. A tedious but straightforward calculation shows for $x, y \in B_r(0)$ that
\begin{align*}
\partial_{x_i} G(x,y) &= -\frac1{2\pi}\l[\frac{x_i-y_i}{|x-y|^2}-\frac{|y|\l(|y|x_i-\frac{y_i}{|y|}r^2\r)}{\l||y|x-\frac{y}{|y|}r^2\r|^2}\r] \\
&\le -\frac1{2\pi}\l[\frac{x_i-y_i}{|x-y|^2}-\frac{|y|}{\l||y|x-\frac{y}{|y|}r^2\r|}\r] \\
&\le -\frac1{2\pi}\l[\frac{x_i-y_i}{|x-y|^2}-\frac{1}{r-|x|}\r].
\end{align*}
Therefore, we obtain
\begin{equation}\notag
|\partial_{x_i}J_2(x,t)| \le \frac{1}{2\pi}\int_{B}\l[\frac{1}{|x-y|}+\frac{1}{r-|x|}\r]|\omega(y,t)| \, dy.
\end{equation}

We choose a test function $\zeta \in C^\infty(2Q)$ such that $\zeta =1$ in $Q$, $\zeta = 0$ on $\partial_p (2Q)$ and satisfies the bounds
\[
|\nabla \zeta| \le Cr^{-1} \quad \text{and} \quad \l|\frac{\partial\zeta}{\partial t}\r| \le Cr^{-1}.
\]
We apply the vorticity estimates~\eqref{VE} with $\alpha=0$ and H\"older's inequality to obtain
\begin{equation}\label{omega_eps}
\begin{split}
\esssup_{t \in I} \int_{B} |\omega(y,t)| \, dy &\le Cr^{-1}\int_{2Q}|\omega||u|+|\omega| \, d\mu \\
&\le Cr^{-1} \l(\int_{2Q} |\omega|^2 \, d\mu\r)^{\frac {1}{2}} \l(\int_{2Q} (|u|+1)^2 \, d\mu\r)^{\frac1{2}},
\end{split}
\end{equation}
and, therefore, we may conclude that
\begin{align*}
 \esssup_{\sigma B}\int_{B} \frac{|\omega(y,t)|}{r-|x|} \, dy &\le \frac{1}{(1-\sigma) r} \esssup_{t \in I}\int_{B} |\omega(y,t)| \, dy \\
  &\le \frac {C}{(1-\sigma)r^2} \l(\int_{2Q} |\omega|^2 \, d\mu\r)^{\frac1{2}}\l(\int_{2Q} (|u|+1)^2 \, d\mu\r)^{\frac1{2}} \\
    &\le \frac {Cr}{(1-\sigma)} \l(\dashint_{2Q} |\omega|^2 \, d\mu\r)^{\frac1{2}}\l(\dashint_{2Q} |u|^{2+\eps} \, d\mu+1\r)^{\frac1{2(1+\eps)}},
\end{align*}
uniformly for almost every $t \in I$.

Let $l\df=(-k_1, k_2)$. We have shown that 
\begin{equation}\label{law}
\partial_{x_i}\varphi(x,t)-l_j=[\partial_{x_i}J_1(x,t)-l_j]+\partial_{x_i}J_2(x,t), \quad i=1,2, i \neq j, 
\end{equation}
where
\begin{equation}\label{A_2}
\esssup_{\sigma B}|\partial_{x_i}J_2(x,t)| \le \frac1{2\pi}\int_{B} \frac{|\omega(y,t)|}{|x-y|}  \, dy + \frac{C\Gamma}{1-\sigma},
\end{equation}
for a constant $C$ independent of the time variable $t$. 

It remains to show a similar estimate for $\partial_{x_i}J_1(x,t)-l_j$. We begin by observing that, for each $t$, this is a harmonic function in the space variables. Therefore, using the $L^\infty - L^1$ estimate for harmonic functions, the representation~\eqref{law} and~\eqref{A_2}, as well as Jensen's inequality, give
\begin{equation}\label{harmonic_bound}
\begin{split}
&\esssup_{\sigma B} |\partial_{x_i} J_1(\cdot,t)-l_j|\\
&\le \frac{C}{(1-\sigma)^2}\dashint_{\frac{1+\sigma}2B} |\partial_{x_i} J_1(x,t)-l_j|\, dx \\
&\le \frac{C}{(1-\sigma)^2}\dashint_{\frac{1+\sigma}2B} |\partial_{x_i} \varphi(x,t)-l_j| \, dx +  \frac{C}{(1-\sigma)^2}\dashint_{\frac{1+\sigma}2B} |\partial_{x_i}J_2(x,t)| \, dx \\
&\le \frac{C}{(1-\sigma)^2}\dashint_{B} |u(x,t)-k| \, dx + \frac{C\Gamma}{(1-\sigma)^3}  \\
&\qquad+ \frac{C}{(1-\sigma)^2} \l(\dashint_{B}\l( \int_{B}\frac{|\omega(y,t)|}{|x-y|} \, dy \r)^{2(1+\eps)} \, dx\r)^{\frac1{2(1+\eps)}},
\end{split}
\end{equation}
for almost every $t \in I$.
Now, by the Hardy-Littlewood-Sobolev estimate of Lemma~\ref{singular} and by the vorticity estimates~\eqref{VE}, with $\alpha=2-\frac2{2+\eps}-1$, we obtain
that 
\begin{align*}
&\esssup_{t \in I}\l\|\int_{B}\frac{|\omega(y,t)|}{|\cdot-y|} \, dy\r\|_{L^{2(1+\eps)}(B), avg} \\
&\le C(\eps)\,r\esssup_{t \in I}\|\omega(\cdot,t)\|_{L^{2-\frac2{2+\eps}}(B), avg} \\
&\le Cr \l(\dashint_{2Q} |\omega|^2 \, d\mu\r)^{\frac12}\l(\dashint_{2Q} |u|^{2+\eps} \, d\mu+1\r)^{\frac{1}{2(1+\eps)}}.
\end{align*}
Combining the above estimates yields
\begin{equation}\notag
\begin{split}
&\esssup_{\sigma B} |\partial_{x_i} J_1(\cdot,t)-l_j| \\
&\le \frac{Cr}{(1-\sigma)^{3}}\l(\dashint_{2Q} |\omega|^2\ d\mu\r)^{1/2}\l( \dashint_{2Q} |u|^{2+\eps} +1\ d\mu\r)^{\frac1{2(1+\eps)}}+\frac{C\|u(\cdot, t)-k\|_{L^1(B), avg}}{(1-\sigma)^{2}},
\end{split}
\end{equation}
as required.
This, together with~\eqref{law},~\eqref{A_2} and~\eqref{stream} concludes the proof.
\end{proof}
\end{lemma}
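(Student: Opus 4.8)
The plan is to convert the pointwise bound on $u$ into one on the gradient of a stream function and then exploit the explicit Green's function of the ball. Since $\divt u = 0$ on $\Omega$, there is a stream function $\vp$ with $u_1 = -\partial_{x_2}\vp$, $u_2 = \partial_{x_1}\vp$, and $\Delta\vp = \omega$. Representing $\vp$ through the Dirichlet Green's function $G(x,y)$ on $B_r$ splits it as $\vp = J_1 + J_2$, with $J_2(x,t) = -\int_B G(x,y)\omega(y,t)\,dy$ the Newtonian-potential part and $J_1$ the harmonic part carrying the boundary data. Differentiating $J_2$ and estimating $\partial_{x_i}G$ — which requires handling the reflected singularity at $y\,r^2/|y|^2$ — should give a pointwise bound by the target singular integral $\tfrac{1}{2\pi}\int_B \tfrac{|\omega(y,t)|}{|x-y|}\,dy$ plus a boundary error of the form $\tfrac{1}{r-|x|}\int_B|\omega(y,t)|\,dy$.

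The crux is controlling this boundary error uniformly in time, which is not available from $\divt u = 0$ alone; here I would invoke the vorticity estimate~\eqref{VE}. Applying~\eqref{VE} with $\alpha = 0$ to a cutoff $\zeta$ equal to $1$ on $Q$, vanishing on $\partial_p(2Q)$, and satisfying $|\nabla\zeta|, |\partial_t\zeta| \le Cr^{-1}$, yields
\[
\esssup_{t \in I}\int_B |\omega(\cdot,t)|\,dy \le Cr^{-1}\int_{2Q}|\omega|(|u|+1)\,d\mu,
\]
and H\"older's inequality converts the right-hand side into the quantity $\Gamma$ of~\eqref{gamma_lemma}. Since $\tfrac{1}{r-|x|} \le \tfrac{1}{(1-\sigma)r}$ for $x \in \sigma B$, this captures simultaneously the uniform-in-time bound and the correct $(1-\sigma)$-dependence of the $J_2$ error.

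For the harmonic part, I would use that $\partial_{x_i}J_1(\cdot,t)$ is harmonic in space for each fixed $t$, so the $L^\infty$--$L^1$ estimate for harmonic functions controls its supremum over $\sigma B$ by its average over $\tfrac{1+\sigma}{2}B$. Rewriting $\partial_{x_i}J_1 - l_j = (\partial_{x_i}\vp - l_j) - \partial_{x_i}J_2$ and using $\partial_{x_i}\vp - l_j = \pm(u_j - k_j)$ reduces this to the $L^1$-average of $u-k$ plus the average of $\partial_{x_i}J_2$. The singular integral inside $\partial_{x_i}J_2$ is now bounded in an $L^{2(1+\eps)}$-average via Lemma~\ref{singular}: the Riesz potential $I_1$ maps $L^{2-\frac{2}{2+\eps}}$ boundedly into $L^{2(1+\eps)}$, and~\eqref{VE} with $\alpha = 2 - \frac{2}{2+\eps} - 1$ converts the resulting $\omega$-norm into a $\Gamma$-type factor. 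Assembling the pointwise bound on $\partial_{x_i}J_2$ (the singular integral and $C\Gamma/(1-\sigma)$) with the harmonic bound on $\partial_{x_i}J_1$ (a term $C(1-\sigma)^{-2}\|u-k\|_{L^1(B),avg}$ and $Cr(1-\sigma)^{-3}$ times the $\Gamma$-factor), and recalling $\partial_{x_i}\vp - l_j = \pm(u_j - k_j)$, reconstructs $|u(x,t)-k|$ and gives the claimed estimate with the power $(1-\sigma)^{-3}$.

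The main obstacle I anticipate is precisely this uniform-in-time control: the boundary error terms must be tamed for almost every $t$ simultaneously, and the only available mechanism is the time-integrated estimate~\eqref{VE}, which trades the spatial $\esssup_t$ of $|\omega|$ for a space-time integral coupling $\omega$ and $u$. The delicate point is selecting the exponent $\alpha$ in each application of~\eqref{VE} so that H\"older and the Hardy--Littlewood--Sobolev inequality combine cleanly into the single quantity $\Gamma$, while carefully tracking the powers of $(1-\sigma)$ accumulated through the two mean-value passages.
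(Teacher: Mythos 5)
Your proposal is correct and follows essentially the same route as the paper's proof: the stream-function/Green's-function decomposition $\vp = J_1 + J_2$, the pointwise estimate of $\partial_{x_i}G$ with the reflected singularity, the use of~\eqref{VE} with $\alpha=0$ plus H\"older to control the boundary error $\frac{1}{r-|x|}\int_B|\omega|$ uniformly in time, and the $L^\infty$--$L^1$ harmonic estimate for $\partial_{x_i}J_1$ combined with Lemma~\ref{singular} (mapping $L^{2-\frac{2}{2+\eps}}$ into $L^{2(1+\eps)}$) and~\eqref{VE} with $\alpha = 2-\frac{2}{2+\eps}-1$. Even the bookkeeping of the powers of $(1-\sigma)$ matches the paper's argument.
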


As a simple corollary of the previous lemma we obtain the following estimate.
\begin{corollary}\label{biot_corollary}
Let $0<\sigma <1$.  Suppose $u$ is a weak solution to equation~\eqref{equation} in $\Omega_T \Supset 2Q$ satisfying~\eqref{VE} such that $u \in L^{2+\eps}(2Q)$, for some $\eps>0$.  Then there exists a constant $C=C(\eps, V_o)>0$ such that for every $(x,t) \in \sigma Q$ we have
\[
|u(x,t)-u_{2B}|\le \frac1{2\pi}\int_{B} \frac{|\omega(y,t)|}{|x-y|}  \, dy + \frac{C}{(1-\sigma)^3}\big[\|u(\cdot, t)-u_{2B}\|_{L^1(2B), avg}+\Gamma\big],
\]
where $\Gamma$ is as in~\eqref{gamma_lemma}. In particular, the constant $C$ is independent of the time variable $t$.\end{corollary}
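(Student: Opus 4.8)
The plan is to apply Lemma~\ref{vorticity_singular} directly, choosing the free constant vector $k$, for each fixed time $t$, to be the spatial average $u_{2B}=u_{2B}(t)\df=\dashint_{2B} u(y,t)\,dy$. The key observation that makes this legitimate is that Lemma~\ref{vorticity_singular} holds for \emph{every} $k \in \R^2$ and furnishes a constant $C$ that does not depend on $t$. Consequently, nothing prevents us from making a time-dependent selection of $k$, applying the estimate separately for almost every $t$, and retaining the uniformity in time. Since $u \in L^\infty_{\rm loc}(0,T;L^1_{\rm loc}(\Omega))$, the average $u_{2B}(t)$ is finite for almost every $t \in 2I$, so the substitution is valid a.e.; on the exceptional null set we interpret the right-hand side as $+\infty$, following the convention of the remark after Lemma~\ref{vorticity_singular}, and the estimate holds trivially.

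With $k=u_{2B}(t)$, the lemma yields, for every $(x,t)\in\sigma Q$,
\[
|u(x,t)-u_{2B}|\le \frac1{2\pi}\int_{B} \frac{|\omega(y,t)|}{|x-y|}\,dy + \frac{C}{(1-\sigma)^3}\big[\|u(\cdot,t)-u_{2B}\|_{L^1(B),avg}+\Gamma\big],
\]
which is precisely the claimed inequality except that the averaged $L^1$-norm is taken over $B$ rather than over $2B$. It therefore only remains to pass from the average over $B$ to the average over the larger ball $2B$.

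This last step is purely measure-theoretic. Since $B\subset 2B$ and $|2B|/|B|=4$ in two dimensions, we have
\[
\|u(\cdot,t)-u_{2B}\|_{L^1(B),avg}=\frac1{|B|}\int_{B}|u-u_{2B}|\,dx\le \frac{|2B|}{|B|}\,\frac1{|2B|}\int_{2B}|u-u_{2B}|\,dx=4\,\|u(\cdot,t)-u_{2B}\|_{L^1(2B),avg},
\]
and absorbing the dimensional factor $4$ into $C$ gives the stated bound. Because the constant produced by the lemma is already independent of $t$ and the enlargement factor is a fixed dimensional constant, the final $C=C(\eps,V_o)$ remains independent of the time variable. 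There is no genuine analytic difficulty in this corollary; the only point requiring care is the interchange between the ``$k$ fixed'' formulation of Lemma~\ref{vorticity_singular} and the time-dependent choice $k=u_{2B}(t)$, which is harmless precisely because the lemma's constant is uniform in $t$.
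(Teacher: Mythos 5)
Your proof is correct and takes exactly the paper's route: the paper's entire proof of the corollary is the single line ``Choose $k=\big((u_1)_{2B}, (u_2)_{2B}\big)$ in Lemma~\ref{vorticity_singular}.'' Your write-up simply makes explicit two points the paper leaves implicit---the legitimacy of the time-dependent choice $k=u_{2B}(t)$, which is harmless since the lemma's constant is uniform in $k$ and $t$, and the enlargement of the averaged $L^1$-norm from $B$ to $2B$ at the cost of a factor $4$ absorbed into $C$---and both are handled correctly (note only that the a.e.\ finiteness of $u_{2B}(t)$ already follows from $u\in L^{2+\eps}(2Q)$ via Fubini, so the appeal to $u\in L_{\rm loc}^\infty(0,T;L_{\rm loc}^1(\Omega))$ is not needed).
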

\begin{proof}
Choose $k=\big((u_1)_{2B}, (u_2)_{2B}\big)$ in Lemma~\ref{vorticity_singular}.
\end{proof}

We will next use the above Biot-Savart law in place of a Sobolev embedding in a suitable Moser iteration scheme. This allows us to iteratively improve the integrability of the velocity field $u$. Observe that we only require some integrability of $\omega$ rather than the existence of full Sobolev gradients. 

\pagebreak

\section{Uniform integrability estimates for the weak solutions}

In this section we will prove Theorem~\ref{main_theorem}. The main steps of the proof are as follows:
\begin{enumerate}
\item[1.] use the vorticity estimates~\eqref{VE} to obtain an $L^q$ estimate for $\omega$ {\it uniformly in time}, where on the right hand side the integrability assumptions on $u$ and $\omega$ are used to control the different terms;
\item[2.] combine this uniform in time $L^q$ estimate with the Hardy-Littlewood-Sobolev estimate of Lemma~\ref{singular} and the Biot-Savart type estimate of Lemma~\ref{vorticity_singular} to obtain that $u \in L^{2+\rho}$, for some quantitatively determinable $\rho>\eps$;
\item[3.] plug the newly acquired estimate for $u$ into the first step in order to use H\"older's inequality with a higher power for $u$ and with a correspondingly lower power for $\omega$ so that we may choose $\alpha$ larger than previously to conclude an improved $L^q$ estimate for $\omega$, again uniformly in time;
\item[4.] iterate the process to obtain quantitative growth rate for the $L^p$-norm of $u$;
\item[5.] finally, the result follows from showing that the $L^p$-norms are exponentially summable.
\end{enumerate}
We proceed with the detailed proof.

\begin{proof}[Proof of Theorem~\ref{main_theorem}] According to the plan above, we divide the proof into five steps.

\medskip

\noindent \textbf{Step 1.}
Let  $r>0$ be small enough so that $Q_{2r} \Subset \Omega_T$. We apply the vorticity estimate~\eqref{VE} with increasing values of $\alpha$ for $0 < \alpha < 1$. Let $j \in \{0,1,2, \dots\}$ be fixed. We consider the problem in a sequence of cylinders $$Q_k \df=Q_{r_k}=  B_{r_k} \times I_{r_k}\df=B_k\times I_k$$ with
\[
r_k=(2-2^{k-j})r
\] 
for $k=0, 1, 2,\dots, j$.

Let $\zeta_k\in C^\infty(Q_k)$ be a sequence of cut-off functions such that $0 \le \zeta_k \le 1$, $\zeta_k=1$ in $Q_{k+1}$ and $\zeta_k = 0$ on $\partial_p Q_{k}$. Moreover, choose $\zeta_k$ such that
\begin{equation}\label{zeta_grad}
|\nabla \zeta_k| \le \frac{2^{j-k}C}{r} \quad \text{and}\quad |\partial_t \zeta_k| \le \frac{2^{j-k}C}{r}.
\end{equation}
Inserting this cut-off function into~\eqref{VE} yields
\begin{equation}\label{average_cacc}
\begin{split}
&\esssup_{t \in I_{k+1}} \dashint_{B_{k+1}} |\omega|^{1+\alpha} \, dx \\
&\le 2^{j-k}C\dashint_{Q_{k}} |\omega|^{1+\alpha} + |u||\omega|^{1+\alpha} \, d\mu \\
&\le 2^{j-k}C\dashint_{Q_{k}} |\omega|^{1+\alpha} \, d\mu + 2^{j-k}C\left(\dashint_{Q_{k}} |u|^q \, d\mu\right)^{\frac 1 q}\left(\dashint_{Q_{k}} |\omega|^{\frac{q(1+\alpha)}{q-1}} \, d\mu \right)^{\frac{q-1}{q}},
\end{split}
\end{equation}
for $2 < q \le \infty$ and for a uniform constant $C$. For each $q \in (2,\infty]$, we choose $\alpha \in (0,1]$ such that
\[
\frac{q(1+\alpha)}{q-1}=2.
\]
This choice yields $\alpha = 1- 2/q \in (0,1)$, and we obtain from~\eqref{average_cacc} that
\begin{equation}\label{omega_uniform}
\begin{split}
&\esssup_{t \in I_{k+1}} \dashint_{B_{k+1}} |\omega|^{2\l(1-\frac1q\r)} \, dx \\
&\le 2^{j-k}C\left(\dashint_{Q_k} |\omega|^{2} \, d\mu\right)^{1-\frac1q}+ 2^{j-k}C\left(\dashint_{Q_k} |u|^q \, d\mu\right)^{\frac1{q}}\left(\dashint_{Q_k} |\omega|^{2} \, d\mu \right)^{1-\frac1q}.
\end{split}
\end{equation}
Thus, if $u \in L^q(Q_k)$, we also have $\omega \in L^\infty(I_{k+1}; L^{2\l(1-\frac1q\r)}(B_{k+1}))$.
\medskip

\noindent \textbf{Step 2.}
By Lemma~\ref{vorticity_singular}, we further obtain for every $(x,t) \in B_{k+2} \times I_{k+2}$ that
  \begin{align*}
|u(x,t)|\le \frac1{2\pi}\int_{B_{k+1}} \frac{|\omega(y,t)|}{|x-y|}  \, dy + C8^{j-k}\Lambda_o,
 \end{align*}
 where we have denoted
 \[
 \Lambda_o=\Lambda_o(Q_o)\df=\sup_{t \in I_o} \dashint_{B_o} |u| \, dx+\l(4r^2\dashint_{Q_o} |\omega|^2\, d\mu+1\r)^{\frac12}\l(\dashint_{Q_o} |u|^{2+\eps}+1 \ d\mu\r)^{\frac1{2+\eps}}.
 \]
We estimate the right hand side above with the Hardy-Littlewood-Sobolev potential estimate of Lemma~\ref{singular} together with~\eqref{omega_uniform}. We will use the Lemma with $\omega \in L^{2\l(1-\frac1q\r)}$ and $q \in [2+\eps, \infty)$. Observe that the constant in the Lemma depends on $q$ and we have to carefully analyze the dependence. For $q \in [2+\eps, \infty)$, we obtain
\begin{equation}\label{u_uniform}
\begin{split}
 &\|u(\cdot, t)\|_{L^{ 2(q-1)}(B_{k+2})} \\
  &\le C\, q^\frac12 \|\omega(\cdot, t)\|_{L^{2\l(1-\frac1q\r)}(B_{k+1})}+C8^{j-k}  r^{\frac1{q-1}}\Lambda_o \\
  &\le C8^{j-k}q^\frac12r^{\frac{1}{q-1}}\l[r\l(\dashint_{Q_k} |\omega|^2 \, d\mu\r)^{\frac 12}\l[1+\l(\dashint_{Q_k} |u|^q \, d\mu\r)^\frac1q\r]^{\frac{q}{2(q-1)}}+\Lambda_o\r]
 \end{split}
 \end{equation}
uniformly for all $t  \in I_{k+2}$. 

Since the above estimate is uniform in time, averaging the integrals yields
 \begin{equation}\label{u_reverse}
 \begin{split}
 &\|u\|_{L^{ 2(q-1)}(Q_{k+2}), avg} \\
 &\le C8^{(j-k)}q^\frac12\big[r\|\omega\|_{L^2(Q_o), avg}[1+\|u\|_{L^q(Q_k), avg}]^{\frac{ q}{2(q-1)}}+\Lambda_o\big].
\end{split}
 \end{equation}
Observe that for $q>2$, we have 
\[
 2(q-1)>q.
\] 

\medskip

\noindent \textbf{Steps 3. and 4.} Starting from $u \in L^{2+\eps}$, for some $\eps>0$, we may iterate to eventually obtain that $u$ is integrable to an arbitrary high power. Indeed, we will iterate~\eqref{u_reverse} with increasing values of $q$. The constant will consequently blow up in the above estimate and we are not able to obtain local boundedness of $u$, as expected, since such a result is not true. Instead, we will show that our estimates are enough to show an exponential integrability estimate.

First, we will however, renumber the cylinders $Q_k$, since for technical reasons we will need to jump over $Q_{k+1}$ in the above estimate. We denote $l_k=2k$ and consider the subsequence $(Q_{l_k})$ instead of $(Q_k)$. For simplicity, we may, however, return notationally back to $(Q_k)$ and identify it with the subsequence $(Q_{l_k})$.

Choose $q_0=2+\eps$ as well as 
\[
q_{k+1}=2\l(q_k-1\r), 
\] 
for $0 \le k \le j-1$. This gives
\[
q_k=2^k\eps+2, \quad k=0,1,2, \dots.
\]
Plugging $q_k$ into~\eqref{u_reverse}, and taking in account the above renumbering of cylinders $Q_k$, yields
 \begin{align*}
 \|u\|_{L^{q_{k+1}}(Q_{k+1}), avg} &\le C^{(j-k)}q_k^\frac12\big[r\|\omega\|_{L^2(Q_o), avg}[1+\|u\|_{L^{q_k}(Q_k), avg}]^{\frac{ q_k}{q_{k+1}}}+\Lambda_o\big] \\
 &\le C^{(j-k)}q_k^\frac12\Lambda_o\l[\|u\|_{L^{q_{k}}(Q_{k}), avg}^{\frac{q_k}{q_{k+1}}}+1\r]
 \end{align*} 
 for every integer  $0\le k \le j-1$. Here we used the fact that
\[
r\|\omega\|_{L^2(Q_{o}), avg} \le \Lambda_o.
\] 

By enlarging the constant $C$ if necessary, we obtain by iteration that
 \begin{equation}\label{u_q}
 \begin{split}
 &\|u\|_{L^{q_{j}}(Q_j), avg}\\
  &\le Cq_{j-1}^\frac12\Lambda_o\l[\|u\|_{L^{q_{j-1}}(Q_{j-1}), avg}^{\frac{q_{j-1}}{q_{j}}}+1\r]\\
& \le Cq_{j-1}^\frac12\Lambda_o\l[C^2q_{j-2}^\frac12\Lambda_o\l(\|u\|_{L^{q_{j-2}}(Q_{j-2}), avg}^{\frac{q_{j-2}}{q_{j-1}}}+1\r)\r]^{\frac{q_{j-1}}{q_{j}}}\\
&\le Cq_{j-1}^\frac12\l[C^2q_{j-2}^\frac12\r]^{\frac{q_{j-1}}{q_j}}\Lambda_o^{1+{\frac{q_{j-1}}{q_j}}}\l[\|u\|_{L^{q_{j-2}}(Q_{j-2}), avg}^{\frac{q_{j-2}}{q_j}}+1\r]
 \\
&\ \  \vdots \\
&\le \prod_{k=1}^{j}\l[C^kq_{j-k}^\frac12\r]^{\frac{q_{j+1-k}}{q_j}}\Lambda_o^{q^*}\l[\|u\|_{L^{q_{o}}(Q_{o}), avg}^{\frac{q_{o}}{q_j}} +1\r] \\
&\le \Lambda_o^{2q^*}\prod_{k=1}^{j}\l[C^kq_{j-k}^\frac12\r]^{\frac{q_{j+1-k}}{q_j}},
 \end{split}
 \end{equation}
where
 \[
 q^*\df= \sum_{k=1}^{j}\frac{q_{k}}{q_j}=\frac1{2^j\eps+2}\sum_{k=1}^j \l[2^k\eps+2\r]=\frac{\l[2^j-1\r]\eps+ j}{2^{j-1}\eps+1} \le 4-\log_2\eps,
 \]
and
 \[
\frac {q_0}{q_j} =\frac {2+\eps}{2^j\eps+2}\le 1
 \]
for all $j \ge 1$ and $\eps >0$. 

For the right hand side of~\eqref{u_q} we have
\begin{align*}
\prod_{k=1}^j\l[C^kq_{j-k}^\frac12\r]^\frac{q_{j+1-k}}{q_j} \le q_j^{\frac {q^*}2}C^{\widehat q},
\end{align*}
where
\[
\widehat q = \sum_{k=1}^j k\frac{q_{j+1-k}}{q_j}=\sum_{k=1}^j k\frac{2^{j+1-k}\eps-2}{2^j\eps +2}\le \sum_{k=1}^j k2^{1-k} \le \sum_{k=1}^\infty k2^{1-k} < \infty,
\]
uniformly for all $j \ge 1$. Combining the above gives
\begin{equation}\label{q_j}
\|u\|_{L^{q_{j}}(Q_j), avg} \le Cq_j^{\frac{q^*}2}\Lambda_o^{2q^*},
\end{equation}
for a uniform constant $C$ independent of $j$.

\medskip

\noindent \textbf{Step 5.}
Let $k \ge 2$ and choose $j$  such that $$k \le q_j =2^j\eps +2 \le 2k.$$ 
Plugging this into~\eqref{q_j} yields
\[
\dashint_{Q_r} |u|^k \, d\mu\le C^kk^{\frac{kq^*}2} \Lambda_o^{2q^*k}
\]
for all $k \ge 2$. 
Using this to estimate the right hand side in~\eqref{u_uniform} gives
\begin{align*}
\esssup_{t \in I_{\frac r2}} \dashint_{B_\frac r2} |u(x, t)|^k \, dx &\le C^k k^{\frac k2}\Lambda_o^k\l[1+\dashint_{Q_r} |u|^k \, d\mu\r] \\
&\le  [C\Lambda_o^{c_o}]^kk^{c_ok},
\end{align*}
for $k \ge 2$, and
\[
c_o\df =2q^*+1.
\]
By using the previous estimate, together with H\"older's inequality, we obtain, for an integer $\gamma \ge 1$, that 
\begin{align*}
\dashint_{B_\frac r2} e^{\sqrt[\gamma]{|u(x,t)|}} \, dx  &\le \dashint_{B_\frac r2} \sum_{k=0}^\infty \frac{|u(x,t)|^{\frac k\gamma}}{k!} \, dx\\
&\le \sum_{k=0}^\infty \frac 1{k!} \dashint_{B_\frac r2}|u(x,t)|^\frac k\gamma \, dx\\
&\le C\l[\dashint_{B_\frac r2}|u(x,t)| \, dx+1\r]^\frac1\gamma +\sum_{k=2}^{\gamma^2-1} \frac {[C\Lambda_o^{c_o}]^\frac k\gamma k^{\frac{c_ok}{\gamma}}}{k!} \\
&\qquad+ \sum_{k=\gamma^2}^\infty \frac {[C\Lambda_o^{c_o}]^\frac k\gamma k^{\frac{c_ok}{\gamma}}}{k!} 
\end{align*}
uniformly for all $t \in I_\frac r2$. By Stirling's formula, there exists a uniform constant $C_1$ such that, by choosing $\gamma = C_1 c_o \Lambda_o$, we obtain for the tail
\[
\sum_{k=\gamma^2}^\infty \frac {[C\Lambda_o^{c_o}]^\frac k\gamma k^{\frac{c_ok}{\gamma}}}{k!} \le \sum_{k=\gamma^2}^\infty \frac {\l[\frac{C}{C_1}\r]^k k^k}{k!}  \le 1.
\]
We finally get
\[
\esssup_{t \in I_{\frac r2}}\dashint_{B_\frac r2} e^{\sqrt[\gamma]{|u(x,t)|}} \, dx \le C(\eps, \Lambda_o),
\]
where
\[
\Lambda_o=\Lambda_o(Q_o)=\esssup_{t \in I_o} \dashint_{B_o} |u| \, dx+C\l(r^2\dashint_{Q_o} |\omega|^2\, d\mu+1\r)^{\frac12}\l(\dashint_{Q_o} |u|^{2+\eps} \, d\mu+1 \r)^{\frac1{2+\eps}},
\]
for a uniform constant $C$. This finishes the proof.
\end{proof}

\begin{remark}\label{NS_remark}
The above proof of the main theorem only relies on the vorticity estimates~\eqref{VE} for the weak solutions of the Euler equation and on the Biot-Savart law of Lemma~\ref{vorticity_singular}. Therefore, the whole argument can also be completed for weak solutions of the incompressible Navier-Stokes system
\[
\partial_tu- \Delta u + (u\cdot\nabla)u =-\nabla p.
\]
Indeed, the diffusive viscosity term $\Delta u$ will only add a positive contribution on the left hand side of~\eqref{VE}. This can also be used to absorb the additional term appearing in the vorticity formulation
\[
\partial_t \omega - \Delta \omega + (u \cdot \nabla) \omega + (\omega \cdot \nabla)u=0
\]
 in higher dimensions. 

Consequently, it is an easy exercise to show that there exists a constant $C=C(\alpha_o)>0$ such that, for every $\alpha \ge \alpha_o >0$, the vorticity formally satisfies the energy estimate
\begin{equation}\label{NSVE}
\begin{split}
&\esssup_{0 \le t \le T} \int_{\Omega} |\omega|^{1+\alpha}\zeta^2(x,t) \, dx + \int_{0}^{T}\int_{\Omega} |\nabla |\omega|^{\frac{1+\alpha}{2}}|^2 \zeta^2 \, dx \, dt \\
&\le C\int_{0}^{T}\int_{\Omega} |\omega|^{1+\alpha}\l[|u||\nabla \zeta|+\chi_{\{d>2\}}|u|^2\zeta^2+\left(\frac{\partial \zeta}{\partial t}\right)_+\r] \, dx \, dt,
\end{split}
\end{equation}
for every non-negative test function $\zeta \in C^\infty(\Omega_T)$ vanishing on the parabolic boundary $\partial_p \Omega_T$.
Observe that here one requires the existence of weak gradients for $\omega$. A rigorous treatment removing this assumption requires an approximation argument, for which we refer to~\cite{Stru88}.

The above energy estimate now includes a term of the form $|u|^2|\omega|^{1+\alpha}$, in addition to $|u||\omega|^{1+\alpha}$. Therefore, for the higher-dimensional Navier-Stokes system the assumptions of Theorem~\ref{main_theorem} must be modified to $\omega \in L_{\rm loc}^d$ and $u \in L_{\rm loc}^{\kappa+\eps}$, for
\begin{equation}\label{kappa}
\kappa
=
\begin{cases}
2 \quad&\text{if} \ d=2, \\
\frac{2d}{d-2}\quad &\text{if} \ d \ge3.
\end{cases}
\end{equation}
The modifications, to extend Theorem~\ref{main_theorem} for the Navier-Stokes system, are straightforward and we omit the details, even though we exploit this in the next section, where we use our methodology to conclude the Serrin regularity theorem~\cite{Serr62}.
\end{remark}

\section{On the interior regularity for Navier-Stokes}

We will now comment on how our reasoning can be used to give a modified proof of the classical Serrin regularity theorem~\cite{Serr62} for weak solutions of the incompressible Navier-Stokes system
\begin{equation}\label{NS_equation}
\begin{cases}
\partial_t u -\Delta u+ u \cdot \nabla u &= \ -\nabla p, \\
\qquad \qquad\qquad  \nabla \cdot u &= \ \ 0,
\end{cases}
\end{equation}
in a space-time cylinder $\Omega_T \subset \Rn \times \R_+$. Moreover, we will also show how the assumptions can be slightly relaxed in two dimensions, while acknowledging that the original goal of Serrin was to consider the case $d \ge 3$; the well-posedness of the two-dimensional Navier-Stokes system had already been established by the seminal contributions of Leray, Hopf and Ladyzhenskaya ~\cite{Lera34, Hopf, Lady58}.

\begin{theorem}
Suppose
\[
u \in L_{\rm loc}^\infty(0, T; L_{\rm loc}^2(\Omega)) \cap L_{\rm loc}^q(0, T; L_{\rm loc}^s(\Omega))
\]
is a weak solution of the Navier-Stokes system~\eqref{NS_equation} such that $\omega \in L_{\rm loc}^2(\Omega_T)$ and
\begin{alignat}{2}
q,s&>2,   \quad&&\text{if}\ d=2,\notag\\   
\frac 2q + \frac d{s} &\le1 &&\text{if}\ d \ge 3.  \label{Serrin_condition}
\end{alignat}
Then $u \in C^\infty$ in the space variable with locally bounded derivatives.
\end{theorem}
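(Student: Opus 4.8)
The plan is to bootstrap from the Serrin-type integrability hypothesis to smoothness by first leveraging the main theorem of this paper (in its Navier-Stokes incarnation described in Remark~\ref{NS_remark}) to upgrade the velocity field, and only afterwards deducing full spatial smoothness through standard linear parabolic regularity. First I would verify that the hypotheses of the Navier-Stokes version of Theorem~\ref{main_theorem} are met. The assumed Serrin condition $\tfrac2q+\tfrac ds\le 1$ (respectively $q,s>2$ in two dimensions) guarantees, via the energy estimate~\eqref{NSVE}, that the problematic term $\int |u|^2|\omega|^{1+\alpha}\zeta^2$ can be absorbed or controlled: using H\"older's inequality together with the parabolic Sobolev embedding~\eqref{parabolic_Sobolev} applied to $|\omega|^{(1+\alpha)/2}$, the Serrin exponent relation is exactly what allows the viscous gradient term on the left-hand side to dominate the quadratic velocity contribution. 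This is the step where the precise arithmetic of $\tfrac2q+\tfrac ds\le 1$ enters, and it lets one recover vorticity estimates of the form~\eqref{VE} with the higher integrability dictated by $\kappa$ in~\eqref{kappa}.

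Once the vorticity estimates are validated, the Moser iteration of Theorem~\ref{main_theorem} (in its Navier-Stokes form) yields that $u$ is exponentially integrable, and in particular $u\in L^\infty_{\rm loc}(0,T;L^p_{\rm loc}(\Omega))$ for every finite $p$, uniformly in time. The next step is to feed this into the equation viewed as a \emph{linear} parabolic system. Writing the momentum equation as $\partial_t u-\Delta u=-\nabla p-(u\cdot\nabla)u$ and treating the right-hand side as a forcing term, the high integrability of $u$ (now arbitrarily large) makes the nonlinearity $(u\cdot\nabla)u=\nabla\cdot(u\otimes u)$ a divergence of a term in every $L^p_{t,x}$. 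The pressure is recovered in the usual way by taking the divergence of the momentum equation, giving $-\Delta p=\partial_{x_i}\partial_{x_j}(u_iu_j)$, so that Calder\'on--Zygmund theory places $\nabla p$ in the same high-integrability class as the nonlinear term. Parabolic $L^p$-regularity (maximal regularity for the heat semigroup) then gives $u\in W^{1,2}_p$ locally, i.e. $\partial_t u,\nabla^2u\in L^p_{\rm loc}$ for all finite $p$.

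From here the argument is a routine bootstrap. Having $\nabla^2 u\in L^p_{\rm loc}$ for all $p$ upgrades $\nabla u$ to $L^\infty_{\rm loc}$ by Sobolev embedding, which in turn makes the forcing term $(u\cdot\nabla)u$ lie in a H\"older space; Schauder estimates for the heat operator then yield $u\in C^{2+\theta}$ in space for some $\theta\in(0,1)$. Iterating the differentiated equations---each spatial derivative $\partial^\beta u$ solves a linear parabolic equation whose forcing is built from lower-order derivatives already known to be H\"older continuous---promotes $u$ to $C^{k}$ in space for every $k$, giving $u\in C^\infty$ in the space variable with locally bounded derivatives, as claimed.

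I expect the main obstacle to be the first step: rigorously establishing that the Serrin condition delivers the vorticity estimates~\eqref{VE} in the Navier-Stokes setting, since this requires carefully balancing the quadratic term $|u|^2|\omega|^{1+\alpha}$ against the dissipative gradient term using the parabolic Sobolev inequality~\eqref{parabolic_Sobolev}, and it is precisely here that the borderline nature of the exponents (and the improvement to $q,s>2$ in two dimensions) must be controlled. The subsequent linear bootstrap, while technically involved, is standard once arbitrarily high integrability of $u$ is in hand.
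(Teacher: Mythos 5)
Your first two steps essentially reproduce the paper's own strategy: the Serrin relation \eqref{Serrin_condition} enters exactly where you say it does, namely in absorbing the vortex-stretching term $|u|^2|\omega|^{1+\alpha}\zeta^2$ of \eqref{NSVE} via H\"older's inequality and the parabolic Sobolev inequality \eqref{parabolic_Sobolev}, and the improved vorticity integrability feeds, through the Biot--Savart estimate of Lemma~\ref{vorticity_singular}, into Theorem~\ref{main_theorem} in its Navier--Stokes form (Remark~\ref{NS_remark}) to give $u\in L^p_{\rm loc}$ for every finite $p$. Two imprecisions are worth noting. First, in two dimensions there is nothing to absorb, since the quadratic term in \eqref{NSVE} carries the factor $\chi_{\{d>2\}}$; this \emph{absence}, not a delicate balancing, is what permits the relaxation to $q,s>2$. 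Second, for $d\ge 3$ the passage from the Serrin hypothesis to the hypothesis $u\in L^{\kappa+\eps}_{\rm loc}$ of Remark~\ref{NS_remark} is not automatic (for instance $L^\infty_t L^3_x\not\subset L^{6}_{t,x}$ when $d=3$): one must first establish $\omega\in L^d\cap L^\infty_t L^{2+\eps}_x$ and only then apply Lemma~\ref{vorticity_singular} to upgrade $u$, a step your sketch leaves implicit.

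The genuine gap is in your endgame. In a purely interior setting the pressure is not at your disposal: it is determined only up to a function harmonic in $x$ with completely arbitrary dependence on $t$, and Definition~\ref{weak_solution} (which tests only against divergence-free fields) gives no control on it whatsoever. Concretely, $u(x,t)=g(t)$, with $g$ bounded, measurable and nowhere differentiable, is a local weak solution of \eqref{NS_equation} in this sense, with formal pressure $p(x,t)=-g'(t)\cdot x$; it satisfies every hypothesis of the theorem ($\omega\equiv 0$ and all integrability conditions hold), yet $\partial_t u$ and $\nabla p$ are not functions in any $L^p_{\rm loc}$. The Calder\'on--Zygmund recovery $-\Delta p=\partial_{x_i}\partial_{x_j}(u_iu_j)$ returns $p_1=0$ in this example, so it silently discards exactly the harmonic part that invalidates the linear equation to which you want to apply maximal regularity; consequently no conclusion of the form $\partial_t u,\nabla^2 u\in L^p_{\rm loc}$ can be drawn, and the same obstruction propagates through your Schauder iteration, since every differentiated equation still contains $\nabla p$. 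This is precisely why the theorem asserts smoothness \emph{only in the space variable}. The paper's proof avoids the pressure altogether: once $u\in L^p_{\rm loc}$ for large $p$ is in hand, it runs a Moser iteration on the vorticity inequality \eqref{omega_reverse} to conclude $\omega\in L^\infty_{\rm loc}$, and then obtains spatial smoothness, as in Serrin's original paper, from the convolution representation of $u$ in terms of $\omega$ --- a bootstrap of spatial singular integrals carried out pointwise in time, in which neither the pressure nor any time derivative ever appears. Your argument becomes correct if the maximal-regularity/Schauder step is replaced by this vorticity-based bootstrap.
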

\begin{remark}
The result is originally due to Serrin~\cite{Serr62}, where ~\eqref{Serrin_condition} is assumed also for $d=2$. In addition to providing a slightly modified argument for the result, we observe that this requirement can be relaxed in two dimensions to just assuming $q,s>2$. 

Observe that -- similarly to Serrin -- we do not assume the existence of Sobolev gradients, which makes it possible to apply our method also for the Euler equations, as we have seen. If one assumes that the solution is {\it a priori} in the energy space with full Sobolev gradients, then one may use the Sobolev embedding to obtain that a function satisfying our condition also satisfies the original Serrin condition in two dimensions. 
The borderline case of equality in~\eqref{Serrin_condition} is originally due to Fabes, Jones and Rivi\`ere~\cite{FabeJoneRivi72}. Later on, different arguments have been provided, for instance, by Struwe~\cite{Stru88}.

We also exclude the case $q=\infty$ and $s=d$, where important contributions have been made in the three-dimensional case, for instance, by Escauriaza, Seregin and \v{S}ver\'ak~\cite{EscaSereSver03}.

\end{remark}

\begin{proof}

The Navier-Stokes and the Euler equations have different scalings and for this reason we redefine the space-time cylinders by setting
\[
\sigma Q=\sigma B\times \sigma I =B(0,\sigma r) \times (-\sigma r^2, 0),
\]
for $\sigma >0$. 
Let $0< \delta <1$ and choose a standard cut-off function $\zeta \in C^\infty((1+\delta)Q)$ such that $\zeta = 0$ on $\partial_p (1+\delta)Q$ with $\zeta = 1$ in $Q$, and 
\[
|\nabla \zeta| \le \frac C{\delta r}\quad \text{as well as} \quad \l|\frac{\partial \zeta}{\partial t}\r| \le \frac C {(\delta r)^2}.
\]
We may now apply the energy estimate~\eqref{NSVE} for the vorticity formulation of the Navier-Stokes equations. 

Let $0<r <1$ be so small that 
\[
Q_{3r}=B_{3r}(0) \times \big(-3r^2,0\big) \subset B_{3r}(0) \times (-3r,0)\df=U \Subset \Omega \times (0, T).
\]
Assume first that $u$ is in $L^p$, for $p$ large enough. We will later use Theorem~\ref{main_theorem} and the subsequent Remark~\ref{NS_remark} to prove this. We obtain from the parabolic Sobolev embedding~\eqref{parabolic_Sobolev} and the energy estimate~\eqref{NSVE} that
\begin{equation}\label{omega_reverse}
\begin{split}
&\l(\dashint_{I}\l(\dashint_{B} \big(|\omega|^{\frac{1+\alpha}2}\big)^{s^*}\, dx\r)^{\frac{q^*}{s^*}} \, dt \r)^{\frac{2}{q^*}}\\
&\le C \l(\dashint_{(1+\delta)I}\l(\dashint_{(1+\delta)B} \big(|\omega|^{\frac{1+\alpha}2}\zeta\big)^{s^*}\, dx\r)^{\frac{q^*}{s^*}} \, dt \r)^{\frac{2}{q^*}}\\
&\le C \l(\esssup_{ (1+\delta)I} \dashint_{ (1+\delta)B} |\omega|^{1+\alpha}\zeta^2(x,t) \, dx + r^2\dashint_{ (1+\delta)Q} \big|\nabla \big(|\omega|^{\frac{1+\alpha}{2}}\zeta\big)\big|^2 \, d\mu\r) \\
&\le Cr^2\dashint_{ (1+\delta)Q}|\omega|^{1+\alpha}\l[|u||\nabla \zeta|+\chi_{\{d>2\}}|u|^2\zeta^2+\left(\frac{\partial \zeta}{\partial t}\right)_+\r] \, d\mu\\
&\le \frac {C\big(1+r^2\|u\|_{L^p, avg}\big)}{\delta^2}\dashint_{(1+\delta)Q} |\omega|^{(1+\alpha)\frac p{p-2}}\, d\mu,
\end{split}
\end{equation}
for every $p >1$ and for every $(1+\delta)Q \Subset \Omega_T$, as well as for every $2 < s^*,q^* <\infty$ with
\begin{equation}\label{star}
\frac d{s^*} +\frac2{q^*} \ge \frac d2.
\end{equation}
Now we may choose
\[
q^*=s^*=2\l(1+\frac2d\r)
\]
and $p$ large enough to guarantee that $$\frac {s^*}2 > \frac p{p-2}.$$ This allows us to conclude, by a standard Moser iteration, that \[
\esssup_{ Qr} |\omega| \le C\l(\dashint_{Q_{2r}} |\omega|^2 \, d\mu\r)^{1/2},
\]
where the constant $C$  depends on the locally uniform bound of $\|u\|_{L^p}$. This is achieved by iterating~\eqref{omega_reverse} for increasing values of $\alpha$, starting with $\alpha=1$. The theorem now follows as in Serrin~\cite{Serr62} from the standard iterative argument of using the convolution representation of $\omega$ in terms of $u$. 

It remains to show the $L^p$-boundedness of $u$ in $ Q_{2r}$. For $d=2$, we immediately obtain, from Theorem~\ref{main_theorem} and Remark~\ref{NS_remark}, that if
\[
u \in L^\infty(I_{3 r}; L^1(B_{3r})) \cap L^{2+\eps}(U) \quad \text{for some}\  \eps>0,
\] 
then $u \in L^p(U)$, for all $p >1$, which finishes the proof. For higher dimensions, we need to show that the assumptions of Theorem~\ref{main_theorem} hold. This can be done in a manner similar to ~\cite{Stru88}, but instead of proving that $\omega \in L^{\infty}(2I;L^{d+\eps}(2B))$, it is enough to show that
\begin{equation}\label{omega_integrability}
\omega \in L^d(2Q) \cap L^\infty(2I; L^{2+\eps}(2B))
\end{equation}
for some $\eps >0$. Indeed, by Lemma~\ref{singular}, we have that if $\omega \in L^\infty(I; L^{2+\eps}(B))$, we directly obtain from the Biot-Savart law of Lemma~\ref{vorticity_singular} that $u \in L^{\kappa+\rho}( Q(2r))$, for $\kappa$ as in~\eqref{kappa}, and $\rho=\rho(\eps)>0$. This is then enough to conclude the proof as described in the Remark \ref{NS_remark}.

We proceed similarly to~\cite{Stru88}. By using the fact that $u \in L_{\rm loc}^q(L_{\rm loc}^s)$ we may use H\"older's inequality to control the term containing $|\omega|^{1+\alpha}|u|^2$, on the right hand side of~\eqref{omega_reverse}, as
\begin{equation}\label{absorb}
\||\omega|^{1+\alpha}|u|^2\zeta^2\|_{L^1} \le \|\omega\zeta^{\frac{2}{1+\alpha}}\|_{L^{\frac{(1+\alpha)q_o}{2}}\big(L^{\frac{(1+\alpha)s_o}{2}}\big)}^{1+\alpha}\|u\|_{L^{q}(L^{s})(\spt \zeta)}^{2},
\end{equation}
where
\[
s_o \df= \frac{2s}{s-2}\quad \text{and} \quad  q_o \df= \frac{2q}{q-2}.
\]
In order to improve the integrability of $\omega$ with the above estimate we need to guarantee that 
\begin{equation}\label{star_improvement}
s^* \ge s_o\quad \text{and} \quad q^* \ge q_o.
\end{equation}
In this case we may absorb the term obtained in~\eqref{absorb} to the left hand side of~\eqref{omega_reverse} by choosing the support of the test function small enough. This is possible as long as the pair $(q_o, s_o)$  satisfies the condition ~\eqref{star}, i.e.,
\[
\frac d2 \le\frac d{s_o} +\frac2{q_o} = 1+\frac d2 -\frac ds -\frac 2q.
\]
This yields the Serrin condition~\eqref{Serrin_condition} for $s$ and $q$. Finally, the result follows from iterating~\eqref{omega_reverse} for increasing values of $\alpha$. For the details we refer to~\cite{Stru88}, where the only difference is that we may conclude the iteration already after obtaining~\eqref{omega_integrability}. 

\end{proof}

\noindent {\bf Acknowledgments.} This work initiated while the authors were visiting the Mittag-Leffler Institute during the program "Evolutionary Problems" and was concluded while J.S. visited the University of Coimbra; we thank both institutions for the kind hospitality. J.S. was supported by the Academy of Finland grant 259363 and by a V\"ais\"al\"a Foundation travel grant. J.M.U. was partially supported by the Centre for Mathematics of the University of Coimbra -- UID/MAT/00324/2013, funded by the Portuguese Government through FCT/MCTES and co-funded by the European Regional Development Fund through the Partnership Agreement PT2020. 

\medskip

\end{document}